\newtheorem{theorem}{Theorem}[section]
\newtheorem{fact}[theorem]{Fact}
\newtheorem{remark}[theorem]{Remark}
\newtheorem{lemma}[theorem]{Lemma}
\newtheorem{cor}[theorem]{Corollary}
\newtheorem{question}[theorem]{Question}
\theoremstyle{definition}
\newtheorem{df}[theorem]{Definition}
\newcommand{\NN}{{\mathbb{N}}}
\newcommand{\RR}{{\mathbb{R}}}
\newcommand{\QQ}{{\mathbb{Q}}}
\newcommand{\sub}{\subseteq}
\newcommand{\sN}[1]{_{#1\in \NN}}
\newcommand{\bi}{\begin{itemize}}
\newcommand{\ei}{\end{itemize}}
\newcommand{\bc}{\begin{center}}
\newcommand{\ec}{\end{center}}
\newcommand{\estring}{\emptyset}
\newcommand{\ex}{\exists}
\newcommand{\fa}{\forall}
\newcommand{\aaa}{\alpha}
\newcommand{\wt}{\widetilde}
\newcommand{\ol}{\overline}
\newcommand{\vectornorm}[1]{\left| \! \left|#1\right| \! \right|}
\newcommand{\maxnorm}[1]{{\vectornorm {#1} }_\text{max}}
\begin{document}

\title{The Scott rank of Polish metric spaces}

\author{Sy Friedman, Martin K\"orwien, Ekaterina Fokina, Andr\'e Nies}
\begin{abstract} We study the usual notion of Scott rank but in the setting of   Polish metric spaces. The signature  consists of distance relations: for each rational $q > 0$, there is a relation $R_{<q}(x,y)$  stating  that the distance of $x$ and $y $ is less than $q$. We show that compact spaces have  Scott rank at most $\omega$, and that there are discrete ultrametric spaces of arbitrarily high countable Scott rank.

This research was carried out in 2012 and has been available on Nies' publications web page.  We are putting it up on arXiv as a reference because of later research on the open question posed in here whether the Scott rank of a Polish metric space is always countable.  We include updates on the question.
\end{abstract}
\maketitle

\section{Introduction}

\begin{df} \label{sig} We view a metric space $(X,d)$ as a    structure for the signature
 \bc  $\{R_{q}\colon\, q \in \QQ^+\}$, \ec  
 where the $R_{q}$  are  binary relation symbols. The intended meaning of $R_{ q} x y$ is that $d(x,y)< q$.  
 \end{df}  Clearly, isomorphism of such structures  is the same as  isometry.

We ask to which extent  Polish metric spaces behave like countable structures.  For instance, just like for countable structures, if $X_0 \cong_p X_1$  (partial isomorphism, given by system of back and forth relations as in  \cite{Barwise:73}) then $X_0 \cong X_1$.

For the reader's convenience, we recall the
definitions of $\alpha$-equivalence and
Scott rank of a structure.

\begin{df}
	Let $M$ be an $\mathcal{L}$-structure.
	We define inductively what it means for
	finite tuples of same length 
	$\bar{a},\bar{b}$ from $M$ 
	to be $\alpha$-equivalent, denoted by
	$\bar{a}\equiv_\alpha\bar{b}$.
	\begin{itemize}
		\item $\bar{a}\equiv_0\bar{b}$ if and
		only if the quantifier-free types of
		the tuples are the same.
		\item For a limit ordinal $\alpha$,
		$\bar{a}\equiv_\alpha\bar{b}$ if and
		only if $\bar{a}\equiv_\beta\bar{b}$
		for all $\beta<\alpha$.
		\item $\bar{a}\equiv_{\alpha+1}\bar{b}$
		if and only if both of the following hold:
		\begin{itemize}
			\item For all $x\in M$, there is some
			$y\in M$ such that 
			$\bar{a}x\equiv_\alpha\bar{b}y$
			\item For all $y\in M$, there is some
			$x\in M$ such that 
			$\bar{a}x\equiv_\alpha\bar{b}y$
		\end{itemize}
	\end{itemize}
\end{df}

The \emph{Scott rank} $\mathrm{sr}(M)$ of a 
structure $M$ is defined as the smallest 
$\alpha$ such that $\equiv_\alpha$
implies $\equiv_{\alpha+1}$ for all tuples 
of that structure. We remark that always
$\mathrm{sr}(M)<|M|^+$.

%
%
A metric  space $X$ is called ultrahomogeneous if any isometry between finite  subsets of $X$ can be extended to an  auto-isometry of $X$. 
 Using a back and forth argument, one shows that a countable structure has Scott rank $0$ iff it is ultrahomogeneous. 
\begin{fact} A Polish space has Scott rank $0$ iff it is ultrahomogeneous.  \end{fact}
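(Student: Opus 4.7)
The plan is this. First I would observe that in this signature the quantifier-free type of a tuple $\bar a=(a_1,\ldots,a_n)$ records exactly the distances $d(a_i,a_j)$, so $\bar a\equiv_0\bar b$ is the same as saying that the map $a_i\mapsto b_i$ is an isometry of finite subsets. A routine induction then shows that the hypothesis ``Scott rank $0$'' (i.e.\ $\equiv_0$ implies $\equiv_1$ on all tuples of the same length) propagates to $\equiv_0\LR\equiv_\alpha$ for every ordinal $\alpha$: at a successor step, if $\bar a\equiv_0\bar b$ and $x\in X$, the forth clause of $\equiv_1$ gives $y\in X$ with $\bar a x\equiv_0\bar b y$, and the inductive hypothesis boosts this to $\equiv_\alpha$, yielding $\bar a\equiv_{\alpha+1}\bar b$.

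The direction $(\LA)$ is immediate from the definition: if $X$ is ultrahomogeneous and $\bar a\equiv_0\bar b$, then the finite isometry $a_i\mapsto b_i$ extends to an auto-isometry $f$ of $X$, and $y=f(x)$ respectively $x=f^{-1}(y)$ witness the forth and back clauses of $\equiv_1$, so the Scott rank is $0$.

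For the nontrivial direction $(\RA)$ I would promote an arbitrary finite isometry $\phi\colon\bar a\to\bar b$ to a global auto-isometry as follows. Fix a countable dense set $D=\{d_n\}_{n\in\NN}\subseteq X$ and run an $\omega$-length back-and-forth starting from $\phi$, where at odd stages we ensure $d_n\in\dom(f)$ and at even stages we ensure $d_n\in\range(f)$. Both extension steps are available because the finite tuple enumerating the current partial map has the same distance matrix as its image, so the two sides are $\equiv_0$-equivalent and therefore, by the Scott rank $0$ hypothesis, $\equiv_1$-equivalent; this delivers the required witness for the next dense point. The union $f_\omega$ of these finite stages is an isometry between two countable dense subsets of $X$ whose graph contains $\phi$.

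The final step invokes the Polish hypothesis. Because $X$ is complete, $f_\omega$ extends uniquely via Cauchy sequences to an isometric map $\hat f\colon X\to X$, which automatically extends $\phi$. The image $\hat f(X)$ is closed (the isometric image of a complete space is complete, hence closed in $X$) and contains the dense set $D$, so $\hat f(X)=X$ and $\hat f$ is the desired auto-isometry. I expect the only genuine subtlety to be arranging the back-and-forth symmetrically in $D$ so that the extension $\hat f$ has dense range rather than being a mere isometric embedding; completeness then does the rest.
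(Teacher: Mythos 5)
Your proof is correct and follows essentially the same route as the paper: the nontrivial direction is a back-and-forth driven by a fixed countable dense set (using Scott rank $0$, i.e.\ $\equiv_0\Rightarrow\equiv_1$, to supply witnesses at each step), and the resulting partial isometry between dense subsets is extended to a surjective auto-isometry by completeness, exactly as the paper sketches. Your added observations (that $\equiv_0$ coincides with having equal distance matrices, and that surjectivity follows since the extension has closed image containing a dense set) are the right details to make that sketch precise.
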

The nontrivial,   left-to right direction is proved via a back and forth argument where the $\forall $ player takes points from the countable dense set. The resulting partial isometry can be extended to a full isometry by completeness.

\begin{question} \ 
\bi \item[(a)] Does every Polish metric space $M$ have  countable Scott rank? 

\item[(b)] Can    every Polish metric space  be described within the class of Polish spaces by  an  $L_{\omega_1, \omega} $     sentence?  \ei \end{question}


By analogy with countable models, some evidence for an affirmative answer to  (b) can  be obtained as follows. By Gao and Kechris~\cite{Gao.Kechris:03}, isometry of Polish spaces (suitably encoded by a real) is $\le_B$ the orbit equivalence relation $E_I$ obtained by the action of $Iso(\mathbb U)$ on the Effros Space $F(\mathbb U)$.  Each orbit is Borel (Luzin-Nardzewsky). So each isometry class of Polish spaces is Borel.

\begin{remark}[May 2019] 

Doucha \cite{Doucha:14, Doucha:17} proved that the Scott rank of $M$ is at most $\omega_1$. Whether it is always countable remains open.

 Ben Yaacov et al \cite[Section 10]{BenYaacov.Nies.ea:17}  answer part (b) in the affirmative.  They produced a Scott sentence in infinitary continuous logic, and observed that it can be translated to classical infinitary logic.

\end{remark}

%
%

Can we show directly that each   isometry class is Borel?

\begin{remark}[May 2019]  Ben Yaacov et al \cite{BenYaacov.Nies.ea:17} have given  a different argument  that each  isometry class not relying on the descriptive set theory approach of~\cite{Gao.Kechris:03}.  Rather,  they construct a Scott sentence in infinitary continuous logic, and the models of such a sentence always form a Borel set. This is not the case for $L_{\omega_1, \omega} $ sentences in classical logic.
\end{remark}
One can verify  that the Scott rank of a metric structure is less than the least hyperprojective $\alpha$; that is, the least $\alpha$ such that $L_\alpha (\RR) $ is a model of Kripke-Platek set theory. This is in fact true for Borel structures.

 There is a Borel linear order $S$ with Scott rank $\omega_1$: reals $b \in [0,1]$  encode binary relations $R_b$. Let $L = \{ b \in [0,1] \colon R_b \text{ is linear}\}$. Let  $S$ be the sum of all $R_b$ for $b \in L$. 

\section{Compact metric spaces are $\exists$-homogenous and hence have Scott rank at most $\omega$.}

Let $X$ be a metric space.  For a tuple $\ol x \in X^n$  consider  the $n\times n$ distance matrix \[D_n(\ol x) = d(x_i,x_j)_{i,j<n}. \] We often   view this matrix as a tuple in $\RR^{n^2}$ with the max norm $\maxnorm{.}$.   Note that for any matrix  $A \in \QQ^{n^2}$ and any positive rational $p$, there is a quantifier-free positive  first-order formula $\phi_{A,n, p} (\ol x)$ in the metric signature (Definition~\ref{sig}) expressing that $\maxnorm { D_n(\ol x) - A} < p$.

 Let $  D_n (X) \sub\RR^{n^2}$ denote the set of $n\times n$ distance matrices.
Let now  $X$ be a compact metric space. Then  for  each $n$, the set $  D_n (X) $ is compact. Gromov \cite{Gromov:07} showed that the space $X$  is described up to isometry by this sequence of compact sets  $D_n (X)\sN n$ (see \cite[proof of  14.2.1]{Gao:09}, but note that our $D_n$ is denoted $M_{n-1}$ there). This shows that isometry of compact spaces is smooth, i.e., Borel reducible to the  identity on $\RR$.  

An \emph{existential positive formula} is a first-order formula   not using $\forall, \lnot$.

 \begin{theorem}   Let $X, Y$ be compact metric spaces. Suppose that   tuples $\widetilde a \in X^p, \widetilde b \in Y^p$  satisfy  the same existential  positive formulas. Then there is an isometry from $X$ to $Y$ mapping $\widetilde a$ to $\widetilde b$.  In particular, each compact metric space  $(X,d)$ is $\exists$-homogeneous. \end{theorem}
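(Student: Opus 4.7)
The plan is to upgrade the existential positive equivalence of $\widetilde a$ and $\widetilde b$ to an \emph{exact} matching of finite distance matrices via compactness, then to assemble a countable partial isometry from $X$ to $Y$ and pass to an isometry by density and completeness; surjectivity is forced at the end by the fact that an isometric self-embedding of a compact metric space is surjective.

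For the approximation step, given any tuple $\bar c = (c_1, \ldots, c_n) \in X^n$ and any positive rational $\epsilon$, choose a rational matrix $A \in \QQ^{(p+n)^2}$ with $\maxnorm{A - D_{p+n}(\widetilde a \bar c)} < \epsilon/2$. The existential positive formula $\exists \bar z \, \phi_{A, p+n, \epsilon/2}(\widetilde x, \bar z)$ is satisfied by $\widetilde a$ in $X$, witnessed by $\bar c$, so by hypothesis it is satisfied by $\widetilde b$ in $Y$, yielding $\bar d^\epsilon \in Y^n$ with $\maxnorm{D_{p+n}(\widetilde b \bar d^\epsilon) - D_{p+n}(\widetilde a \bar c)} < \epsilon$. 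Letting $\epsilon \to 0$ along a sequence and passing to a convergent subsequence by compactness of $Y^n$, continuity of distance produces a tuple $\bar d \in Y^n$ realizing the distance matrix of $\widetilde a \bar c$ \emph{exactly}.

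To extract a coherent infinite matching, fix a countable dense sequence $(c_i)_{i \in \NN}$ in $X$ and let $F_n \subseteq Y^\NN$ be the closed set of sequences $(d_i)$ satisfying $D_{p+n}(\widetilde b \, d_1 \cdots d_n) = D_{p+n}(\widetilde a \, c_1 \cdots c_n)$. Each $F_n$ is non-empty by the previous step, the $F_n$ are nested, and $Y^\NN$ is compact, so $\bigcap_n F_n$ is non-empty. Any element $(d_i)$ of this intersection determines an isometric map sending $\widetilde a$ to $\widetilde b$ and $c_i$ to $d_i$, which extends by completeness to an isometric embedding $f \colon X \to Y$. A symmetric argument yields an isometric embedding $g \colon Y \to X$ with $g(\widetilde b) = \widetilde a$, and then $f \circ g$ is an isometric self-embedding of the compact space $Y$; any point outside its image would give, upon iteration, an infinite separated sequence in $Y$, so $f \circ g$ is onto and hence so is $f$.

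The delicate point is coupling the approximation-plus-compactness argument with the diagonal choice along a dense sequence. It is tempting to try a back-and-forth directly maintaining existential positive type equivalence, but propagating that invariant through the step ``given the already chosen $d_1, \ldots, d_n$, find $d_{n+1}$'' is not obvious, so I bypass it and instead build the whole infinite isometric correspondence in one sweep via the finite intersection property on the compact product $Y^\NN$.
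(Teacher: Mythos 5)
Your proposal is correct and follows essentially the same route as the paper: transfer rational approximations of distance matrices via the formulas $\phi_{A,n,p}$, upgrade them to exact matchings by compactness, assemble the embedding along a countable dense sequence and extend by completeness, and get surjectivity from the fact that an isometric self-embedding of a compact space is onto. The only (cosmetic) difference is that you extract the coherent sequence $(d_i)$ via the finite intersection property in the compact product $Y^{\NN}$, where the paper's lemma uses an iterated convergent-subsequence (diagonal) extraction; the two devices are interchangeable here.
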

 
 \begin{proof}  Recall that any isometric self-embedding of a compact metric space is onto (see \cite[proof of  14.2.1]{Gao:09}). So  by the symmetry it suffices to find an isometric  embedding of $X$ into $Y$  mapping $\widetilde a$ to $\widetilde b$.
 
 The following slightly extends the above-mentioned result of   Gromov (see \cite[Exercise 14.2.3]{Gao:09}).
 \begin{lemma}  Suppose that for any $\epsilon >0$, for any   $n$ and tuple $\ol x \in X^{n}$ there is a tuple $\ol y \in Y^{n}$ such that \[\vectornorm{D(\widetilde a , \ol x ) - D(\widetilde b,  \ol y) }_{max} < \epsilon.\] Then there is an  an isometric  embedding of $X$ to $Y$  mapping $\widetilde a$ to $\widetilde b$.
 \end{lemma}
 
 \begin{proof} For each $k$ let $\ol y_k$ be a tuple such that $\vectornorm{D(\widetilde a , \ol x ) - D(\widetilde b,  \ol y) }_{max} < 1/k$. Let $\ol y $ be the limit of a convergent subsequence of $(\ol y_k)\sN k$. This shows that for each $\ol x $ there is $\ol y $ such that   $D(\widetilde a , \ol x )  = D(\widetilde b,  \ol y)$. 
 
 We can now proceed almost exactly as in \cite[proof of  14.2.1]{Gao:09}. Let $(x_i)\sN i$ be a dense sequence in $X$. For each $n\in \NN$ there are $y^n_0, \ldots, y^n_n\in Y$ such that 
 \bc $D(\widetilde a , x_0, \ldots, x_n )  = D(\widetilde b,   y^n_0, \ldots, y^n_n)$. \ec
 There is $A_0 \subset \NN$  with $0 \not \in A_0$  such that $z_0:= \lim_{n \in A_0} y^n_0$ exists.  There is $A_1 \subset A_0$ with $\min A_0 \not \in A_1$ such that $z_1:= \lim_{n \in A_1} y^n_1$ exists. Proceeding that way we obtain a sequence of points $z_n \in Y$ and a descending sequence of sets $A_0 \supset A_1 \supset A_2 \supset \ldots$ with $\min A_k \not \in  A_{k+1}$ such that $z_k = \lim_{n \in A_k} y^n_k$.
 
 Let $A = \{ \min A_k \colon \, k \in \NN\}$. Then $A \setminus A_k$ is finite for each $k$, so $z_k = \lim_{n \in A} y^n_k$ for each $k$. One uses this to show that $d(x_i,x_j ) = d(z_i, z_j)$ for each $i,j$, and $d(a_r, x_i) = d(b_r, z_i)$ for each $r, j$. Hence the map $x_i \mapsto z_i$ can be extended to the required isometric embedding of $X$  into $Y$  
 \end{proof}
 
 It now suffices to show that if $\widetilde a \in X^p, \widetilde b \in Y^p$  satisfy  the same existential positive formulas, then the hypothesis of the lemma is satisfied.  Recall that the formula $\phi_{A,n,p} (\ol x)$ expresses that $\maxnorm { D_n(\ol x) - A} < p$. 
 Given $\ol  x \in  \ X^{n}$ choose a rational $(k+n ) \times (k+n)$ matrix $A$ such that 
 $$\maxnorm{D(\widetilde a, \ol x) - A} < p= \epsilon/2.$$
 Thus $\exists \ol x  \, \phi_{A,n+k,p}(\widetilde a , \ol x)$ holds in $ X$. Hence  there is $ \ol y \in Y^n$ such that  $  \phi_{A,n+k, p}(\widetilde b , \ol y)$ holds in   $Y$. This implies $\vectornorm{D(\widetilde a , \ol x ) - D(\widetilde b,  \ol y) }_{max} < \epsilon$ as required.  
 \end{proof}
 
 The case $\widetilde a = \widetilde b = \estring$ yields:
 \begin{cor} Within the class of compact metric spaces, each member is uniquely described by its existential positive first-order  theory. \end{cor}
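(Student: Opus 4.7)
The plan is to observe that the corollary is the specialization of the preceding theorem to tuples of length zero. In more detail, the existential positive first-order theory of a structure $M$ is by definition the set of existential positive sentences true in $M$; a sentence is just an existential positive formula $\phi(\ol x)$ with $\ol x$ the empty tuple. So two compact metric spaces $X$ and $Y$ have the same existential positive theory iff the empty tuples $\widetilde a = \estring \in X^0$ and $\widetilde b = \estring \in Y^0$ satisfy exactly the same existential positive formulas.

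The theorem applied to $p = 0$ then produces an isometry from $X$ to $Y$ (mapping the empty tuple to the empty tuple, which is no condition at all). Hence $X \cong Y$, which is precisely the statement that the existential positive theory determines $X$ up to isometry within the class of compact metric spaces.

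There is no real obstacle here: once the theorem is in hand, the corollary is immediate by taking $p = 0$. The only small point to keep in mind is the convention that existential positive formulas with no free variables are exactly the existential positive sentences making up the theory, so that ``$\widetilde a$ and $\widetilde b$ satisfy the same existential positive formulas'' unfolds to ``$X$ and $Y$ have the same existential positive first-order theory.'' With that convention in place, the corollary follows in one line.
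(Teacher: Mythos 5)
Your proposal is correct and matches the paper exactly: the paper derives this corollary precisely as the case $\widetilde a = \widetilde b = \estring$ (i.e.\ $p=0$) of the preceding theorem, with the same unfolding of ``same existential positive formulas on empty tuples'' into ``same existential positive theory.''
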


 Note that a complete metric space is compact iff it is totally bounded, namely, satisfies the $L_{\omega_1, \omega} $ sentence 
 
 \[\bigwedge_{q \in \QQ^+} \bigvee_{n \in \NN} \ex x_0 \ldots x_{n-1} \fa y \bigvee_{i< n} d(x_i,y)< q. \]
 Thus   using the corollary above,  each compact metric space can be described by an   $L_{\omega_1, \omega} $ sentence within the class of Polish metric  spaces.
 
 \begin{cor} The Scott rank of any compact metric space $X$ is at most~$\omega$. \end{cor}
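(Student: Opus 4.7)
The plan is to combine the Theorem's $\exists$-homogeneity of compact metric spaces with the standard correspondence between the back-and-forth hierarchy $\equiv_\alpha$ and formula complexity. The key observation is that any auto-isometry $f$ of $X$ with $f(\widetilde a) = \widetilde b$ automatically witnesses $\widetilde a \equiv_\alpha \widetilde b$ for every ordinal $\alpha$. So, if $\widetilde a \equiv_\omega \widetilde b$ already forces the existence of such an $f$, then in particular $\widetilde a \equiv_{\omega+1} \widetilde b$, which is exactly what the definition of Scott rank asks for at level $\omega$.

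First, I would show by induction on $n < \omega$ that $\widetilde a \equiv_n \widetilde b$ implies that $\widetilde a$ and $\widetilde b$ satisfy the same first-order formulas of quantifier rank $\le n$. The base case follows because a quantifier-free type in the signature $\{R_q : q \in \QQ^+\}$ records exactly the rational comparisons of all pairwise distances, hence is determined by, and determines, the distance matrix. The inductive step is immediate from the definition of $\equiv_{n+1}$ applied to the outermost quantifier, covering in one stroke both existential and universal quantifiers (and positive as well as negated atomic subformulas).

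Next, since every existential positive first-order formula has some finite quantifier rank, $\widetilde a \equiv_\omega \widetilde b$ (which by definition says $\widetilde a \equiv_n \widetilde b$ for every $n$) guarantees that $\widetilde a$ and $\widetilde b$ satisfy the same existential positive formulas in $X$. Applying the Theorem with $Y = X$ then yields an auto-isometry of $X$ mapping $\widetilde a$ to $\widetilde b$, and hence $\widetilde a \equiv_{\omega+1} \widetilde b$, as desired. The $\mathrm{sr}(X) \le \omega$ conclusion follows by the definition of Scott rank, and $\omega$ is the correct bound here rather than any finite $n$ because the existential positive formulas genuinely require unboundedly many nested quantifiers (as the witness construction in the Theorem's lemma picks arbitrarily long extensions $\ol x$).

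The argument is essentially a bookkeeping step layered on top of the Theorem, so I do not expect any real obstacle. The most delicate point is to verify the standard correspondence between the back-and-forth levels $\equiv_n$ and formulas of quantifier rank $\le n$; in our signature this reduces to the fact that the relations $R_q$ together encode the quantifier-free type, which is immediate from Definition~\ref{sig}.
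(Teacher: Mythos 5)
Your proof is correct and follows essentially the same route as the paper: apply the Theorem with $Y=X$ after noting that $\equiv_n$ for all $n<\omega$ forces agreement on all existential positive formulas, and the resulting auto-isometry gives $\equiv_{\omega+1}$ (indeed $\equiv_\alpha$ for all $\alpha$). You simply spell out the standard back-and-forth/quantifier-rank correspondence that the paper leaves implicit; the closing remark that $\omega$ is the exact bound is not needed for (and not part of) the stated corollary.
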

 
 \begin{proof} In the notation of the theorem with $Y= X$, suppose $\wt a \equiv_n \wt b$ for each $n \in \omega$. Then $\wt a$ and $\wt b$ satisfy the same existential formulas. Hence there is an isometry of $X$ sending $\wt a$ to $\wt b$. \end{proof} 
 
%
%
%
%
%
\section{Countable Polish spaces can have arbitrary countable Scott rank	}

Countable structures  have countable
Scott rank. In this section, we show that
arbitrary large countable ordinal ranks are 
possible for countable Polish metric spaces. The examples
we construct  actually will be discrete.

We will inductively construct sub-trees of
$\omega^{<\omega}$ with growing Scott
ranks. Note that we can view such  trees as model
theoretic structures, for example in the language 
of a unary function $f$ with the semantics
``$f(a)=b$ if and only if $b$ is the immediate 
predecessor of $a$''. Also, those trees can be
regarded as metric spaces with the metric induced
by the usual one on $\omega^{<\omega}$.
It is easy to see that the relations $\equiv_\alpha$
(and hence the notion of Scott rank) is the
same in both the tree structure and the
metric space structure.

We denote finite sequences from
$\omega^{<\omega}$ by $\langle n_1,n_2,
\dots,n_k\rangle$ and the concatenation
of two sequences $s,t$ by $s^\frown t$. 
Now we proceed to the
definition of $T_n^\alpha\subset
\omega^{<\omega}$ ($n\leq\omega$,
$\alpha<\omega_1$) by induction over
$\alpha$.

\begin{df}
	For any $n\leq\omega$, let $T_n^0$
	be the collection of the empty sequence
	and the sequences $\langle a\rangle$
	for all $a<n$.
	
	Now suppose we have defined all the
	$T_n^\alpha$ for some $\alpha<\omega_1$.
	Let $T_n^{\alpha+1}$ be the collection of
	\begin{itemize}
		\item the empty sequence
		\item all sequences
		of the form $\langle 2a\rangle^\frown s_a$
		where $a<\omega$ and 
		$s_a\in T_a^\alpha$
		\item for any $a<n$, all sequences
		of the form $\langle 2a+1\rangle^\frown s$
		with $s\in T_\omega^\alpha$
	\end{itemize}
	
	Finally, suppose $\alpha$ is a countable
	limit ordinal and that the $T_n^\beta$
	are already defined for all $n\leq\omega$
	and $\beta<\alpha$. Fix any bijection
	$B:\alpha\times\omega\rightarrow
	\omega$ and define $T_n^{\alpha}$
	as the collection of
	\begin{itemize}
		\item the empty sequence
		\item all sequences
		of the form $\langle a\rangle^\frown s_a$
		where $a=B(c,d)$ for some $c<\alpha$
		and $d\leq n$ and $s_a\in T_\omega^c$
	\end{itemize}
\end{df}

Informally speaking, at each stage of the
construction we glue infinitely many
of the already defined trees into a new one. 
$T_n^{\alpha+1}$ contains all $T_k^{\alpha}$
for $k<\omega$ exactly once and
$T_\omega^{\alpha}$ exactly $n$ times.
At limit stages $\aaa$, we amalgamate $n+1$ copies
of each $T_\omega^\beta$ ($\beta<\alpha$).

Inductively, we see that none
of the constructed trees has an
infinite path and that all of them,
seen as metric spaces, are discrete
and complete (thus they are Polish).
The $T_m^0$ are homogeneous, so they
have Scott rank $0$, and we can verify
inductively that $T_n^\alpha$ has
Scott rank $\alpha\cdot\omega$ for
all $\alpha<\omega_1$ and all 
$n\leq\omega$.
\def\cprime{$'$}


\begin{thebibliography}{1}

\bibitem{Barwise:73}
Jon Barwise.
\newblock Back and forth through infinitary logic.
\newblock pages 5--34. MAA Studies in Math., Vol. 8, 1973.

\bibitem{BenYaacov.Nies.ea:17}
I.~Ben~Yaacov, M.~Doucha, A.~Nies, and T.~Tsankov.
\newblock Metric {S}cott analysis.
\newblock {\em Advances in Mathematics}, 318:46--87, 2017.

\bibitem{Doucha:14}
M.~Doucha.
\newblock Scott rank of {P}olish metric spaces.
\newblock {\em Annals of Pure and Applied Logic}, 165(12):1919 -- 1929, 2014.

\bibitem{Doucha:17}
M.~Doucha.
\newblock Erratum to: {S}cott rank of {P}olish metric spaces [{A}nn. {P}ure
  {A}ppl. {L}ogic 165 (12) (2014) 1919-€"1929].
\newblock {\em Annals of Pure and Applied Logic}, 168(7):1490, 2017.

\bibitem{Gao:09}
Su~Gao.
\newblock {\em Invariant descriptive set theory}, volume 293 of {\em Pure and
  Applied Mathematics (Boca Raton)}.
\newblock CRC Press, Boca Raton, FL, 2009.

\bibitem{Gao.Kechris:03}
Su~Gao and Alexander~S. Kechris.
\newblock On the classification of {P}olish metric spaces up to isometry.
\newblock {\em Mem. Amer. Math. Soc.}, 161(766):viii+78, 2003.

\bibitem{Gromov:07}
M.~Gromov.
\newblock {\em Metric structures for Riemannian and non-Riemannian spaces}.
\newblock Springer Science \& Business Media, 2007.

\end{thebibliography}
\end{document}